\tikzstyle{block}=[draw opacity=0.7,line width=1.4cm]
\DeclareFontFamily{U}{euf}{}
\DeclareFontShape{U}{euf}{m}{n}{%
  <5><6><7><8><9>gen*eufm%
  <10><10.95><12><14.4><17.28><20.74><24.88>eufm10%
  }{}
\DeclareFontShape{U}{euf}{b}{n}{%
  <5><6><7><8><9>gen*eufb%
  <10><10.95><12><14.4><17.28><20.74><24.88>eufb10%
  }{}
\DeclareFontFamily{U}{msb}{}
\DeclareFontShape{U}{msb}{m}{n}{%
  <5><6><7><8><9>gen*msbm%
  <10><10.95><12><14.4><17.28><20.74><24.88>msbm10%
  }{}
\DeclareFontFamily{U}{msa}{}
\DeclareFontShape{U}{msa}{m}{n}{%
  <5><6><7><8><9>gen*msam%
  <10><10.95><12><14.4><17.28><20.74><24.88>msam10%
  }{}
\newtheorem{theorem}{Theorem}[section]
\newtheorem{proposition}[theorem]{Proposition}
\theoremstyle{definition}
\theoremstyle{remark}
\newtheorem{remark}[theorem]{Remark}
\numberwithin{equation}{section} \frenchspacing
\begin{document}

\title[Zeros of a general family of zeta functions]{Hamiltonians for the zeros of a general family of zeta functions}

\author{Su Hu}
\address{Department of Mathematics, South China University of Technology, Guangzhou, Guangdong 510640, China}
\email{mahusu@scut.edu.cn}

\author{Min-Soo Kim}
\address{Department of Mathematics Education, Kyungnam University, Changwon, Gyeongnam 51767, Republic of Korea}
\email{mskim@kyungnam.ac.kr}

\subjclass[2010]{11M35,11B68,11M06,11M26}
\keywords{Hamiltonian, zeros, zeta function, Mellin transform}

\begin{abstract}
Towards the  Hilbert-P\'olya conjecture, in this paper, we present a general construction of Hamiltonian $\hat{H}_{f},$ which leads a general family of Hurwitz zeta functions $(-1)^{z_{n}-1}L(f,z_{n},x+1)$ defined by Mellin transform becomes their eigenstates under a suitable boundary condition, and the eigenvalues $E_{n}$ have the property that $z_{n}=\frac{1}{2}(1-iE_{n})$ are the zeros of a general family of zeta functions $L(f,z)\equiv L(f,z,1)$.
\end{abstract}

\maketitle
\def\ord{\text{ord}_p}
\def\ordt{\text{ord}_2}
\def\o{\omega}
\def\la{\langle}
\def\ra{\rangle}
\def\Log{{\rm Log}\, \Gamma_{\! D,E,N}}

\section{Introduction}
The Riemann zeta function $\zeta(z)$ is conventionally represented as the sum or the integral
\begin{equation}\label{(1)}
\zeta(z)=\sum_{n=1}^{\infty}\frac{1}{n^{z}}=\frac{1}{\Gamma(z)}\int_{0}^{\infty}\frac{t^{z-1}}{e^{t}-1}dt,\quad\textrm{Re}(z) > 1
\end{equation}
(\cite[Theorem 12.2]{Apostol}). The integral reduces to the sum if the denominator of the integrand is expanded in  geometric series.
It is well-known that $\zeta(z)$ can be analytically continued to the whole complex plane except for a single pole at $z=1$ with residue 1
and $\zeta(z)$ satisfies the two equivalent functional equations (we present  one of them here, the other can be obtained from this by letting $z\to1-z$)
\begin{equation}\label{Functional}
\zeta(z)=2(2\pi)^{1-z}\Gamma(1-z)\sin\left(\frac{\pi z}{2}\right)\zeta(1-z),\quad\textrm{Re}(z) < 1
\end{equation}
(\cite[Theorem 12.7]{Apostol}).
From the above equation, we see that $\zeta(z)$ is symmetric with the axis $\textrm{Re}(z)=\frac{1}{2}$.

By (\ref{Functional}), from the vanishing of the function $\sin\left(\frac{\pi z}{2}\right)$, the negative even integers are also zeros of $\zeta(z)$,
which are called the trivial zeros.
Furthermore, from the representation of the zeta function in terms of products involving prime numbers, it can be shown that the zeta function has no zero for Re$(z)>1$ since none of the factors can vanish there.
In 1859, Riemann \cite{Riemann} conjectured that the nontrivial zeros of $\zeta(z)$ are all lie on the critical line $\textrm{Re}(z)=\frac{1}{2},$
namely,
\begin{equation}\label{Re-zero}
\zeta\left(\frac12+ie_* \right)=0,
\end{equation}
where $e_*$ denotes the location of a zero on the critical line.
This becomes one of most remarkable conjectures in mathematics, known as the Riemann hypothesis.
To the purpose of solving Riemann hypothesis, Hilbert-P\'olya \cite{Hilbert} conjectured that the imaginary parts of the zeros of $\zeta(z)$
might correspond  to the eigenvalues of a Hermitian, self-adjoint operator.
Berry and Keating \cite{Berry} conjectured that the classical counterpart of such a Hamiltonian may have the form $\hat{H}=\hat{x}\hat{p}$,
but a Hamiltonian possessing  this property has not yet been found. In quantum mechanics, associated with each measurable parameter in a physical system is a quantum mechanical operator, and the operator associated with the system energy is called the Hamiltonian.

In 2016, Bender, Brody and M\"uller  \cite{Bender} found an interesting correspondence between the nontrivial zeros of
$\zeta(z)$ and the eigenvalues of Hamiltonian for a quantum system.
That is, let $\hat x$ be the position operator and let $\hat{p}=-i\frac d{dx}$ be the momentum operator. 
They constructed a non-Hermitian Hamiltonian
\begin{equation}
\hat{H}=\frac{\mathbbm{1}}{\mathbbm{1}-e^{-i\hat{p}}}(\hat{x}\hat{p}+\hat{p}\hat{x})({\mathbbm{1}-e^{-i\hat{p}}}),
\end{equation}
which satisfies the conditions of the Hilbert-P\'olya conjecture, that is, if the eigenfunctions of $\hat{H}$ satisfy the boundary condition $\psi_{n}(0)=0$ for all $n$, then
the real eigenvalues $E_{n}$ have the property that $\frac{1}{2}(1-iE_{n})$ are the nontrivial zeros of the Riemann zeta function, and the Hurwitz zeta functions
$\psi_{n}(x)=-\zeta(z_{n},x+1)$ are the corresponding eigenstates. They also constructed the metric operator
to define an inner-product space, on which the Hamiltonian is Hermitian. They remarked that if the analysis may be made rigorous to show that
$\hat{H}$ is manifestly self-adjoint, then this implies that the Riemann hypothesis holds true.

In this paper, we present a general construction of Hamiltonian $\hat{H}_{f},$ which leads   a general family of Hurwitz zeta functions $(-1)^{z_{n}-1}L(f,z_{n},x+1)$ becomes their eigenstates under a suitable boundary condition, 
and the eigenvalues $E_{n}$ have the property that $z_{n}=\frac{1}{2}(1-iE_{n})$ are the  zeros of a general family of zeta functions $L(f,z)$ (see Theorem \ref{main} below).
The definitions of $L(f,z,x)$ and  $L(f,z)$ will be given in the next section.

\section{Mellin transform and zeta functions}
Recall that a function $f$ tends rapidly to $0$ at infinity if for any $k\geq 0$, as $t\to +\infty$ the functions $t^{k}f(t)$ tends to $0$. Let  $tf(-t)$ be a $C^{\infty}$ function on $[0,\infty)$
tending rapidly to $0$ at infinity and for $\textrm{Re}(z)>1$, we define a general family of zeta functions $L(f,z)$ from the following integral representation
 \begin{equation}\label{(5)}
L(f,z)=\frac{1}{\Gamma(z)}\int_{0}^{\infty}t^{z}f(-t)\frac{dt}{t}.
\end{equation}
As pointed out by Cohen \cite[Proposition 10.2.2(2)]{Cohen}, if $f(-t)$ itself is a $C^{\infty}$ function on $[0,\infty)$ and
it tends rapidly to $0$ at infinity, then we have $L(f,z)$ can be analytically  continued to the whole of $\mathbb{C}$ into a holomorphic function.

The  Huwitz zeta function $\zeta(z,x)$ is defined by the series
\begin{equation}
\zeta(z,x)=\sum_{n=0}^{\infty}\frac{1}{(n+x)^{z}},\quad\textrm{Re}(z)>1,
\end{equation}
where $x\in (0,\infty)$  (see Cohen \cite[p.~72, Definition 9.6.1]{Cohen} and the discussions for the definition area of $x$ there).
It has the integral representation
\begin{equation}
\zeta(z,x)=\frac{1}{\Gamma(z)}\int_{0}^{\infty}e^{-(x-1)t}\frac{t^{z-1}}{e^{t}-1}dt.
\end{equation}

Inspired from this, we also define a general family of Hurwitz zeta function $L(f, z,x)$ as the following integral representation
\begin{equation}\label{(6)}
L(f,z,x)=\frac{1}{\Gamma(z)}\int_{0}^{\infty} t^{z}e^{-(x-1)t} f(-t)\frac{dt}{t},
\end{equation}
where Re$(z)>1$ and $x\in (0,\infty).$
It is easy to see that $L(f,z,1)=L(f,z)$.

By setting $f(t)$ as different functions in (\ref{(5)}), $L(f,z,x)$ and $L(f,z)$ become
the well-known Riemann zeta functions, Hurwitz zeta functions, Dirichlet $L$-functions, Hecke series and other zeta functions. 
The following are some examples. 
\begin{itemize}
\item
Letting $f(t)=\frac{1}{e^{-t}-1}$ in (\ref{(5)}) and (\ref{(6)}), we have the Riemann zeta function $\zeta(z)$ and the Hurwitz zeta function $\zeta(z,x)$, respectively (see Colmez's Tsinghua lecture notes \cite[p.~4]{Colmez}).

\item 
Suppose  $\chi$ is a Dirichlet character modulo $m$ and $$L(\chi,z)=\sum_{n=1}^{\infty}\frac{\chi(n)}{n^{z}}=\prod_{p}\left(1-\frac{\chi{(p)}}{p^{z}}\right)^{-1} $$
is the Dirichlet $L$-function. From the integral representation of the Gamma function
$$\int_{0}^{\infty}e^{-nt}t^{z}\frac{dt}{t}=\frac{\Gamma(z)}{n^{z}},$$
we have $$L(\chi,z)=\frac{1}{\Gamma(z)} \int_{0}^{\infty}G_{\chi}(t)t^{z-1}dt,\quad\textrm{Re}(z)>1,$$
where 
$$
\begin{aligned}
G_{\chi}(t)&=\sum_{n=1}^{\infty}\chi(n)e^{-nt}=\sum_{r=1}^{m}\chi(r)\sum_{q=0}^{\infty}e^{-(r+qm)t}
\\
&=\sum_{r=1}^m\frac{\chi(r)e^{-rt}}{1-e^{-mt}}
\end{aligned}
$$
(see \cite[p.~160]{Cohen} and \cite[p.~102]{Iwa}).
Thus letting $f(t)=G_{\chi}(-t)$ in (\ref{(5)}) and (\ref{(6)}), 
we recover the Dirichlet $L$-function $L(\chi,z)$ (see \cite{Iwa}) and the two variable Dirichlet $L$-function $L(\chi,z,x)$
(see \cite{fox}).

\item 
Let  
$$
\lambda(z)=\sum_{n=0}^\infty\frac1{(2n+1)^z},\quad\text{Re}(z)>1
$$
be the Dirichlet lambda function according to Abramowitz and Stegun's handbook \cite[pp.~807--808]{AS}, 
which has also been studied by Euler under the notation $N(z)$ (see \cite[p.~70]{Varadarajan}).
By \cite[(2.10)]{HK}, we have the following integral representation
$$
\lambda(z)=\frac1{\Gamma(z)}\int_0^\infty\frac{e^{t}t^{z-1}}{e^{2t}-1}dt, \quad\text{Re}(z)>1.
$$
Thus letting $f(t)=\frac{e^{-t}}{e^{-2t}-1}$ in (\ref{(5)}), we recover the Dirichlet lambda function $\lambda(z)$.

\item 
Let $w$ be an even integer, and let $S_{w+2}$ be the space of cusp forms with respect to $\Gamma=SL(2,\mathbb Z)/(\pm1)$ 
on the upper half-plane of one complex variable. For $\Phi\in S_{w+2},$ 
we write $\Phi$ as a Fourier series
$\Phi(z)=\sum_{n=1}^\infty\lambda_n e^{2\pi inz}.$
Let $L_\Phi(z)$ be the Hecke series of $\Phi.$ Namely,
$$L_\Phi(z)=\sum_{n=1}^\infty\frac{\lambda_n}{n^z},$$
under the condition: there exists $z_{0} > 0$, such that Re$(z)\geq z_{0}$.

It is clear that $L_\Phi(s)$ has the following expression:
$$\begin{aligned}
L_\Phi(z)
&=\frac{(2\pi)^z}{\Gamma(z)}\sum_{n=1}^\infty{\lambda_n}\int_0^\infty e^{-2\pi nt}t^{z-1}dt \\
&=\frac{(2\pi)^z}{\Gamma(z)}\int_0^\infty \Phi(it)t^{z-1}dt.
\end{aligned}$$
So letting $f(-t)=(2\pi)^z\Phi(it)$ in (\ref{(5)}), we recover the Hecke series $L_\Phi(z)$
(see \cite{Man}).
\end{itemize}

Assume the function $tf(t)$ is analytic on an open disc  of radius $r$ centered at 0 in the complex plane $\mathbb{C}$, we have the following result.

\begin{proposition}\label{2.1} 
Let $C$ be a loop around the negative real axis. If  $x\in (0,\infty)$ the function defined by the contour integral
$$
I(f,z,x)=\frac{1}{2\pi i}\int_{C}  t^{z}e^{(1-x)t} f(t)\frac{dt}{t}
$$
is an entire function of $z$.
Moreover, we have
$$
L(f,z,x)=\Gamma(1-z)I(f,z,x), \quad{\rm Re}(z)>1.
$$
\end{proposition}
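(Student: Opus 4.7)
The approach is a direct adaptation of Riemann's Hankel-contour proof of the analytic continuation of $\zeta(z)$, applied to the general integrand in (5)--(6). The plan has two separate components: (i) establish that $I(f,z,x)$ is entire in $z$, and (ii) verify the identity $L(f,z,x) = \Gamma(1-z) I(f,z,x)$ on the half-plane $\mathrm{Re}(z) > 1$, from which the two conclusions of the proposition follow.

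For (i), I would decompose $C = C^{-} \cup C_{\epsilon} \cup C^{+}$, where $C^{\pm}$ are the rectilinear pieces along the upper and lower edges of the negative real axis and $C_{\epsilon}$ is a small circular arc of radius $\epsilon$ around the origin. For each fixed $t \in C \setminus \{0\}$, the integrand $t^{z-1} e^{(1-x)t} f(t)$ is entire in $z$, since $t^{z} = e^{z\log t}$ is entire once the principal branch of $\log t$ is chosen (cut on the negative real axis, with $\arg t = \pm \pi$ on the two edges). Entireness of $I(f,z,x)$ then follows from uniform absolute convergence on compact subsets $K \subset \mathbb{C}$ together with Morera's theorem. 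On $C_{\epsilon}$ the integrand is continuous on a compact set. On the unbounded pieces $C^{\pm}$, parametrized by $s = |t|$, the bound $|t^{z}| \leq s^{M_K} e^{\pi N_K}$ holds uniformly for $z \in K$, and the hypothesis that $tf(-t)$ tends rapidly to zero at infinity provides an integrable majorant independent of $z \in K$.

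For (ii), fix $z$ with $\mathrm{Re}(z) > 1$ and collapse $C$ onto the cut as $\epsilon \to 0$. Using that $tf(t)$ is analytic on $|t| < r$, one has $|f(t)| = O(|t|^{-1})$ near the origin, so the contribution of $C_{\epsilon}$ is of size $O(\epsilon^{\mathrm{Re}(z) - 1})$ and tends to zero. On the two linear edges the principal branch of $t^{z}$ jumps: with $t = -s$, $s>0$, one edge gives $t^{z} = s^{z} e^{i\pi z}$ and the other gives $t^{z} = s^{z} e^{-i\pi z}$. Taking the orientation of $C$ into account, the two edge contributions combine to produce the factor $e^{i\pi z} - e^{-i\pi z} = 2i \sin(\pi z)$ times a single real integral on $(0,\infty)$ which, by the change of variable $s \mapsto -t$ and the definition (6), equals $\Gamma(z)\, L(f,z,x)$. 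Thus
\[
I(f,z,x) \;=\; \frac{1}{2\pi i}\cdot 2i\sin(\pi z)\cdot \Gamma(z)\,L(f,z,x) \;=\; \frac{\sin(\pi z)}{\pi}\,\Gamma(z)\,L(f,z,x),
\]
and the reflection identity $\Gamma(z)\Gamma(1-z)\sin(\pi z) = \pi$ rearranges this to $L(f,z,x) = \Gamma(1-z) I(f,z,x)$.

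The main technical obstacle is the uniform-convergence estimate on the unbounded portion of $C$: the rapid-decay hypothesis on $tf(-t)$ must be strong enough to dominate the factor $|e^{(1-x)t}|$, which along the negative real axis grows exponentially once $(1-x)$ and $\mathrm{Re}(t)$ have the unfavourable signs. For every concrete family listed in this section (Riemann, Hurwitz, Dirichlet, Hecke) this is automatic because the relevant $f(-t)$ decays exponentially, but in the stated generality one should interpret ``tending rapidly to $0$'' so as to guarantee absolute convergence of the contour integral uniformly on compact $z$-sets and of the defining integral (6) for the range of $x$ in question.
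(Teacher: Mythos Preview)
Your proposal is correct and follows essentially the same Hankel-contour argument as the paper: split $C$ into two rays along the cut and a small circle, pick up the factor $e^{i\pi z}-e^{-i\pi z}$ from the branch jump, show the circular contribution is $O(\epsilon^{\mathrm{Re}(z)-1})\to 0$ for $\mathrm{Re}(z)>1$ using analyticity of $tf(t)$ near $0$, and finish with the reflection formula $\Gamma(z)\Gamma(1-z)=\pi/\sin(\pi z)$. Your sketch is in fact slightly more complete than the paper's own proof---you explicitly address entireness via Morera and uniform bounds on compacta (the paper simply refers to Apostol), and you rightly flag the convergence issue on the unbounded rays when $x>1$, a point the paper passes over in silence.
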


\begin{remark}\label{rem-2} 
If $\textrm{Re}(z)\leq 1$ and $x\in (0,\infty),$ we define $L(f,z,x)$ by the equation
$$
L(f,z,x)=\Gamma(1-z)I(f,z,x).
$$
This equation provides the analytic continuation of $L(f,z,x)$ in the entire complex plane. 
\end{remark}

\begin{proof} The proof follows from similar arguments as \cite[Theorem 12.3]{Apostol}.

We regard the interval $[0,\infty)$ of integration as a path of a complex integral, and then expanding this a little. Here we consider the following
contour $C.$
For $\varepsilon>0,$ we define $C$ by a curve $\varphi:(-\infty,\infty)\to\mathbb C$ given by
\begin{equation}
C  : \quad \varphi(u)=\begin{cases}
u &\text{if }u<-\varepsilon, \\
\varepsilon\exp\left(\pi i \frac{u+\varepsilon}{\varepsilon}\right) &\text{if }-\varepsilon\leq u\leq \varepsilon, \\
-u &\text{if }u>\varepsilon.
\end{cases}
\end{equation}
In the definition of $C,$ the parts for $u<-\varepsilon$ and $u>\varepsilon$ overlap, but we interpret
it that for $u<-\varepsilon$ we take the path above the real axis and for $u>\varepsilon$ below the real axis. This path is illustrated in Figure 1.

\begin{figure}[h]
\centerline{ \includegraphics[width=3.5in, height=0.77in]{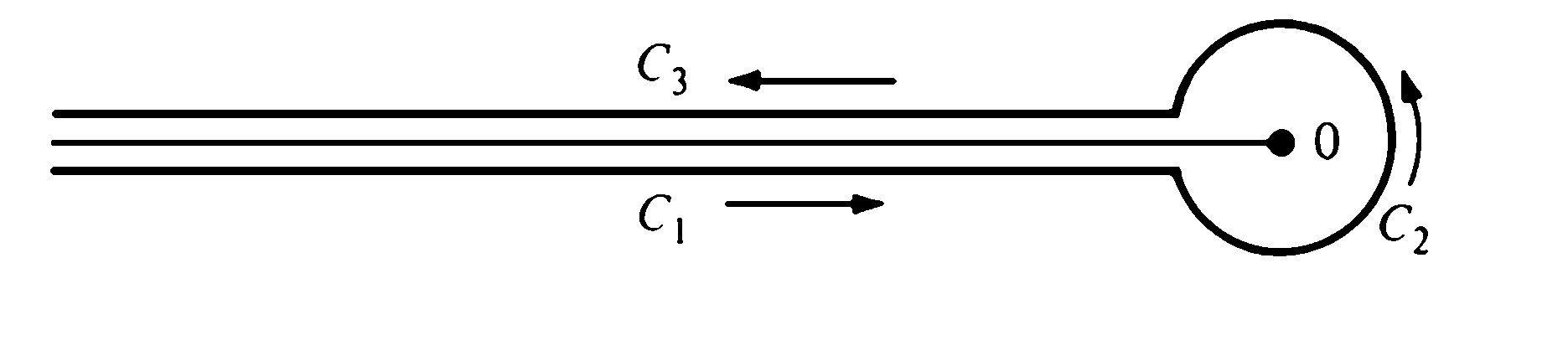}}
\caption{Path of $C$}
\end{figure}

Now consider the complex contour integral
\begin{equation}\label{c-int}
\int_{C}t^{z}e^{(1-x)t}f(t)\frac{dt}{t}.
\end{equation}
Since we have to treat $t^{z-1}$ on $C,$ we shall choose a complex power $t^z$ for $z\in\mathbb C.$
Denoting the argument of $t$ by $\arg t,$ we can define a single-valued function $\log t$ on
$\mathbb C-\{s=a+ib\mid b=0,a\leq0\}$ by
\begin{equation}
\log t=\log|t|+i\arg t \quad(-\pi<\arg t<\pi).
\end{equation}
Using this, a single-valued analytic function $t^z$ on $\mathbb C-\{s=a+ib\mid b=0,a\leq0\}$
is defined by
\begin{equation}
t^z=e^{z\log t}=e^{z(\log|t|+i\arg t)},
\end{equation}
where $-\pi<\arg t<\pi.$
We divide the contour $C$ into three pieces as follows.

$\quad C_1$: the part of the real axis from $-\infty$ to $-\varepsilon,$

$\quad C_2$: the positively oriented circle of radius $\varepsilon$ with center at the origin,

$\quad C_3$: the part of the real axis from $-\varepsilon$ to $-\infty.$

\noindent
We have $C=C_1+C_2+C_3.$ For $t$ on $C_1,$ we put $\arg t=-\pi,$ and for $t$ on $C_{3},$ we put $\arg t=\pi.$
Then the integral on $C_1$ is given by
\begin{equation}
\begin{aligned}
\int_{C_1}t^{z}e^{(1-x)t}f(t)\frac{dt}{t}&=\int_{\infty}^\varepsilon t^{z-1}e^{-\pi iz}e^{-(1-x)t}f(-t)dt \\
&=-e^{-\pi iz}\int^{\infty}_\varepsilon t^{z-1}e^{-(1-x)t}f(-t)dt
\end{aligned}
\end{equation}
and on $C_3$ by
\begin{equation}
\begin{aligned}
\int_{C_3}t^{z}e^{(1-x)t}f(t)\frac{dt}{t}&=\int_\varepsilon^{\infty} t^{z-1}e^{\pi iz}e^{-(1-x)t}f(-t)dt \\
&=e^{\pi iz}\int^{\infty}_\varepsilon t^{z-1}e^{-(1-x)t}f(-t)dt.
\end{aligned}
\end{equation}
Thus, together we get
\begin{equation}\label{inte 1-3}
\begin{aligned}
\int_{C}t^{z}e^{(1-x)t}f(t)\frac{dt}{t}&=(e^{\pi iz}-e^{-\pi iz})\int^{\infty}_\varepsilon t^{z-1}e^{-(1-x)t}f(-t)dt \\
&+\int_{C_2}t^{z}e^{(1-x)t}f(t)\frac{dt}{t}.
\end{aligned}
\end{equation}
The circle $C_2$ is parameterized as $t=\varepsilon e^{i\theta}(-\pi\leq \theta\leq\pi),$ so on $C_2$ we have
$t^{z-1}=\varepsilon^{z-1}e^{i(z-1)\theta},$
and the absolute value of the integral is estimated from above as
\begin{equation}\label{inte-esi}
\begin{aligned}
\left|\int_{C_2}t^{z}e^{(1-x)t}f(t)\frac{dt}{t}\right|
&\leq \int_{-\pi}^\pi \left| \varepsilon^{z-1} e^{i(z-1)\theta}e^{(1-x)\varepsilon e^{i\theta}}f(\varepsilon e^{i\theta})i\varepsilon
e^{i\theta}\right| d\theta \\
&=\varepsilon^{\text{Re}(z)}
 \int_{-\pi}^\pi \left| e^{iz\theta}e^{(1-x)\varepsilon e^{i\theta}} f(\varepsilon e^{i\theta})\right| d\theta.
\end{aligned}
\end{equation}
Since  $tf(t)$ is analytic on an open disc  of radius $r$ centered at 0, we have $\varepsilon f(\varepsilon e^{i\theta})$ is bounded as a function of $\varepsilon$ and $\theta$
if $\varepsilon\leq r/2$  and $-\pi\leq \theta\leq\pi$.
So if we take the limit $\varepsilon\to0$, then, for Re$(z)>1,$ by (\ref{inte-esi}) we get
\begin{equation}\label{inte-esi-0}
\lim_{\varepsilon\to0}\int_{C_2}t^{z}e^{(1-x)t}f(t)\frac{dt}{t}=0.
\end{equation}
Therefore, using (\ref{inte-esi-0}), we get
\begin{equation}\label{inte-final}
\int_{C}t^{z}e^{(1-x)t}f(t)\frac{dt}{t}=(e^{\pi iz}-e^{-\pi iz})\int_0^{\infty} t^{z-1}e^{-(1-x)t}f(-t)dt.
\end{equation}
Let
\begin{equation}
I(f,z,x)=\frac{1}{2\pi i}\int_{C}  t^{z}e^{(1-x)t} f(t)\frac{dt}{t}.
\end{equation}
By (\ref{(6)}) and (\ref{inte-final}), we have
\begin{equation}\label{inte-final-1}
\begin{aligned}
I(f,z,x)&=\frac{1}{2\pi i}\int_{C}  t^{z}e^{(1-x)t} f(t)\frac{dt}{t} \\
&=\frac{e^{\pi iz}-e^{-\pi iz}}{2\pi i}\int_0^{\infty} t^{z-1}e^{-(1-x)t}f(-t)dt \\
&=\frac{\sin\pi z}{\pi}\Gamma(z)L(f,z,x) \\
&=\frac1{\Gamma(1-z)}L(f,z,x),
\end{aligned}
\end{equation}
which is the desired result.
\end{proof}

\section{Main results}

Denote by $\mathbb{R}^{+}=[0,\infty)$ and $H=L^{2}(\mathbb{R}^{+})$ be the Hilbert space of all complex valued functions $g,$
defined almost everywhere on $\mathbb{R^{+}}$ and $|g|^{2}$ is Lebesgue integrable, with the pointwise operations
of additions and the multiplication-by-scalars, and with the $L^{2}-$norm
\begin{equation}
\|g\|_{2}=\left(\int_{\mathbb{R}^{+}}|g(x)|^{2}dx\right)^{1/2}.
\end{equation}
Give a function $f(t)$ such that $tf(-t)$ be a $C^{\infty}$ function on $[0,\infty)$
tending rapidly to $0$ at infinity. Assume $tf(t)$ is analytic on an open disc  of radius $r$ centered at 0 in the complex plane $\mathbb{C}$ and it has the following power series expansion at $t=0$
\begin{equation}\label{(3.1)}
(-t)f(t)=\sum_{n=0}^{\infty}a_{n}t^{n}
\end{equation}
with $a_{0}= 0$ and $a_{1}\neq 0.$ Define an operator on $H$ by
\begin{equation}\label{delta}
\hat{\Delta}_{f}\Psi(x)=\frac{\mathbbm{1}}{f(-i\hat{p})}\Psi(x),
\end{equation}
for any $\Psi(x)\in H$. Here we recall that $\hat{p}=-i\frac d{dx}$ denotes the momentum operator. Then by (\ref{delta}), 
\begin{equation}
\hat{\Delta}_{f}^{-1}\Psi(x)=f(-i\hat{p})\Psi(x),
\end{equation}
and from (\ref{(3.1)}), we get
\begin{equation}\label{op-f}
\hat{\Delta}_{f}^{-1}\Psi(x)=f(-i\hat{p})\Psi(x)=\frac{\mathbbm{1}}{i\hat{p}}\sum_{n=0}^{\infty}a_{n}(-i\hat{p})^{n}\Psi(x).
\end{equation}
Unfortunately, the above series do not always converge. However, if we truncate it by setting
\begin{equation}\label{delta-inv-tr}
s_N(x)=\frac{\mathbbm{1}}{i\hat p}\sum_{n=0}^Na_{n}(-i\hat p)^{n}\Psi(x)
\end{equation}
for some integer $N\geq1,$ then we require that
\begin{equation}\label{delta-inv-tr1}
\hat{\Delta}_{f}^{-1}\Psi(x)=s_N(x)+O(x^{-N}) \quad\text{as } x\to\infty.
\end{equation}

The operator $\hat{\Delta}_{f}$ has the following two examples.

First, letting $\{a_n\}=\{B_n\}$ in (\ref{(3.1)}), where $B_n$ are the Bernoulli numbers (see \cite[p. 803]{AS}). 
Then by (\ref{op-f}), we obtain
\begin{equation}\label{op-f-ex1}
\hat{\Delta}_{f}^{-1}=f(-i\hat p)=\frac{\mathbbm{1}}{i\hat p}\sum_{n=0}^{\infty}B_{n}\frac{(-i\hat p)^{n}}{n!}.
\end{equation}
From the generating function of Bernoulli numbers, we have
\begin{equation}\label{op-f-ex1-1}
\hat{\Delta}_{f}^{-1}=\frac{\mathbbm{1}}{i\hat p}\sum_{n=0}^{\infty}B_{n}\frac{(-i\hat p)^{n}}{n!}=\frac{\mathbbm{1}}{i\hat p}\frac{-i\hat p}{e^{-i\hat p}-\mathbbm{1}},
\end{equation}
which is equivalent to $\hat{\Delta}_{f}=\mathbbm{1}-e^{-i\hat p}$ (see \cite{Bender}).

Next, let $\{a_n\}=\{E_n(0)\}$ in (\ref{(3.1)}), where $E_n(x)$ are the Euler polynomials (see \cite[p. 803]{AS}). 
Similarly, by (\ref{op-f}), we have
\begin{equation}\label{op-f-ex2}
\hat{\Delta}_{f}^{-1}=f(-i\hat p)=\frac{\mathbbm{1}}{i\hat p}\sum_{n=0}^{\infty}E_{n}(0)\frac{(-i\hat p)^{n}}{n!}.
\end{equation}
From the generating function of Euler polynomials, we have
\begin{equation}\label{op-f-ex2-1}
\hat{\Delta}_{f}^{-1}=\frac{\mathbbm{1}}{i\hat p}\sum_{n=0}^{\infty}E_{n}(0)\frac{(-i\hat p)^{n}}{n!}=\frac{2}{i\hat p}\frac{\mathbbm{1}}{e^{-i\hat p}+\mathbbm{1}},
\end{equation}
which is equivalent to $\hat{\Delta}_{f}=\frac12(i\hat p)(\mathbbm{1}+e^{-i\hat p}).$

We are at the position to state the main result. 

\begin{theorem}\label{main}
Let $\hat{p}=-i\frac d{dx}$ be the momentum operator 
and $\hat{H}_{f}=\hat{\Delta}_{f}^{-1}(\hat{x}\hat{p}+\hat{p}\hat{x})\hat{\Delta}_{f}$ be an operator
on the Hilbert space $H=L^{2}(\mathbb{R}^{+}),$ where $\hat x$ is the position operator. If $z\neq 1$, then
$$
\hat{H}_{f}\Psi(f,z,x)=i(2z-1)\Psi(f,z,x),
$$
where $\Psi(f,z,x)=(-1)^{z-1}L(f,z,x+1)$ and $L(f,z,x)$ is defined in (\ref{(6)}) and Remark \ref{rem-2}.
Furthermore, if we propose the boundary condition
$$
\Psi(f,z,0)=0
$$
to the differential equation $\hat{H}_{f}\Psi(f,z,x)=i(2z-1)\Psi(f,z,x),$ then we have
the $n$th eigenstates of the Hamiltonian $\hat{H}_{f}$ are
$$\Psi(f,z_{n},x)=(-1)^{z_{n}-1}L(f,z_{n},x+1),$$ 
the $n$th eigenvalues are $E_{n}=i(2z_{n}-1)$ for all $n,$ and
from the boundary condition, $z_{n}=\frac{1}{2}(1-iE_{n})$ are the zeros of the general family zeta functions $L(f,z)\equiv L(f,z,1)$.
\end{theorem}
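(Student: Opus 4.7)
The strategy is to conjugate the Hamiltonian so that it acts on the much simpler function $x^{-z}$ rather than on $L(f,z,x+1)$. The crucial observation is that $-i\hat p=-\tfrac{d}{dx}$ has $e^{-xt}$ as a formal eigenfunction with eigenvalue $t$, so when acting under the contour integral representation of $\Psi$, the operator $\hat\Delta_f^{-1}=f(-i\hat p)$ becomes multiplication by $f(t)$ and $\hat\Delta_f=1/f(-i\hat p)$ becomes multiplication by $1/f(t)$, cancelling $f(t)$ inside the integrand.

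First I would use Proposition~\ref{2.1} and Remark~\ref{rem-2} to rewrite
\[
\Psi(f,z,x)=(-1)^{z-1}\Gamma(1-z)\cdot\frac{1}{2\pi i}\int_{C}t^{z-1}e^{-xt}f(t)\,dt,
\]
and apply $\hat\Delta_f$ under the integral sign, which cancels the factor $f(t)$ and leaves
\[
\hat\Delta_f\Psi(f,z,x)=(-1)^{z-1}\Gamma(1-z)\cdot\frac{1}{2\pi i}\int_{C}t^{z-1}e^{-xt}\,dt=(-1)^{z-1}x^{-z},
\]
where the final equality is the standard Hankel-type identity, obtained by repeating the $C_1+C_2+C_3$ decomposition of Proposition~\ref{2.1} with $f\equiv 1$ together with $\Gamma(z)\Gamma(1-z)=\pi/\sin\pi z$. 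Next I would use $[\hat x,\hat p]=i$ to write $\hat x\hat p+\hat p\hat x=-2ix\tfrac{d}{dx}-i$ and compute directly that $(\hat x\hat p+\hat p\hat x)x^{-z}=i(2z-1)x^{-z}$. Since scalars commute through $\hat\Delta_f^{-1}$, combining these steps gives
\[
\hat H_f\Psi(f,z,x)=\hat\Delta_f^{-1}\bigl(i(2z-1)\hat\Delta_f\Psi(f,z,x)\bigr)=i(2z-1)\,\Psi(f,z,x),
\]
which is the claimed eigenvalue identity.

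For the second assertion I would simply note that $\Psi(f,z,0)=(-1)^{z-1}L(f,z,1)=(-1)^{z-1}L(f,z)$, so the boundary condition $\Psi(f,z,0)=0$ selects exactly the zeros $z_n$ of $L(f,z)$, and then $E_n=i(2z_n-1)$ gives $z_n=\tfrac12(1-iE_n)$ with eigenstates $(-1)^{z_n-1}L(f,z_n,x+1)$. The main obstacle is rigorously justifying the formal identities involving $\hat\Delta_f=1/f(-i\hat p)$: the series defining $f(-i\hat p)$ in (\ref{op-f}) does not converge on $H=L^2(\mathbb R^+)$, and the interchange of $\hat\Delta_f$ with the Hankel contour integral is purely formal unless interpreted asymptotically. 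The asymptotic-truncation condition (\ref{delta-inv-tr1}) imposed in the setup is precisely what legitimizes these manipulations — the eigenvalue equation and the cancellation $\hat\Delta_f^{-1}\hat\Delta_f\Psi=\Psi$ must be read as equalities up to $O(x^{-N})$ for every $N$ as $x\to\infty$, rather than as honest identities in $L^2(\mathbb R^+)$.
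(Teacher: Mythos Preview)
Your overall plan is correct and lands in the same place as the paper, but the key conjugacy identity is established in the opposite direction. The paper starts from $x^{-z}$ and computes $\hat\Delta_f^{-1}x^{-z}$ by expanding $f(-i\hat p)$ through the power series $(-t)f(t)=\sum_n a_n t^n$, obtaining an asymptotic series in $x$ which it then resums using the Hankel representation $\tfrac{1}{\Gamma(2-z-n)}=\tfrac{1}{2\pi i}\int_C e^u u^{n+z-2}\,du$; this identifies the result as $(-1)^{z-1}L(f,z,x+1)=\Psi(f,z,x)$. You instead start from the contour integral for $\Psi$ and apply $\hat\Delta_f$ under the integral sign via the eigenfunction observation $f(-i\hat p)e^{-xt}=f(t)e^{-xt}$, cancelling $f(t)$ and leaving a bare Hankel integral that you evaluate to a multiple of $x^{-z}$. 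Both routes reduce the problem to the elementary fact $(\hat x\hat p+\hat p\hat x)x^{-z}=i(2z-1)x^{-z}$ followed by conjugation, and your treatment of the boundary condition is identical to the paper's. Your route is arguably more transparent because it avoids the term-by-term differentiation and resummation; the paper's route has the advantage that the final integral it produces is exactly $I(f,z,x+1)$, so Proposition~\ref{2.1} applies verbatim.

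Two small caveats. First, your constant is off: from the paper's direction one has $\hat\Delta_f^{-1}x^{-z}=\Psi$, hence $\hat\Delta_f\Psi=x^{-z}$, not $(-1)^{z-1}x^{-z}$; the discrepancy is a branch choice for $t^{z-1}$ on $C$ and in any case drops out once you write $\hat H_f\Psi=i(2z-1)\hat\Delta_f^{-1}\hat\Delta_f\Psi$. Second, the integral $\tfrac{1}{2\pi i}\int_C t^{z-1}e^{-xt}\,dt$ with $C$ encircling the \emph{negative} real axis diverges for $x>0$: carrying out the $C_1+C_2+C_3$ decomposition you propose yields $\int_0^\infty t^{z-1}e^{xt}\,dt$, which is infinite. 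So your ``standard Hankel-type identity'' must be read either by analytic continuation from $x<0$ or, as you correctly note at the end, purely asymptotically --- no worse than the paper's own use of the divergent series for $\hat\Delta_f^{-1}x^{-z}$, but not something the decomposition alone can make rigorous.
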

\begin{proof} 
By  (\ref{op-f}), 
\begin{equation}\label{op-f-a}
\hat{\Delta}_{f}^{-1}\Psi(x)=\frac{\mathbbm{1}}{i\hat{p}}\sum_{n=0}^{\infty}a_{n}(-i\hat{p})^{n}\Psi(x).
\end{equation}
Since $\hat{p}=-i\frac d{dx}$ and $i\hat{p}=\frac d{dx}$, for $z\neq 1$, we have
\begin{equation}\label{op-xz-1}
\begin{aligned}
i\hat{p}\left(\frac{x^{1-z}}{1-z}\right)&=i(-i)\frac d{dx}\left[\frac{x^{1-z}}{1-z}\right]\\
&=\frac d{dx}\left(\frac{x^{1-z}}{1-z}\right)\\
&=x^{-z}.
\end{aligned}
\end{equation}Thus
\begin{equation}\label{op-xz-1}
\frac{\mathbbm{1}}{i\hat{p}}x^{-z}=\frac{x^{1-z}}{1-z}.
\end{equation}
By (\ref{op-f-a}) with $\Psi(x)=x^{-z}$ and (\ref{op-xz-1}), we have
\begin{equation}\label{(7)}
\begin{aligned}
\hat{\Delta}_{f}^{-1}x^{-z}
&=\sum_{n=0}^{\infty}a_{n}(-i\hat{p})^{n}\frac{\mathbbm{1}}{i\hat{p}}x^{-z} \\
&=\sum_{n=0}^{\infty}a_{n}(-i\hat{p})^{n}\left(\frac{x^{1-z}}{1-z}\right)\\
&=\frac{1}{1-z}\sum_{n=0}^{\infty}a_{n}(-i\hat{p})^{n}x^{1-z}.
\end{aligned}
\end{equation}
Since $i\hat{p}=\frac d{dx}$ and $\left(\frac {d}{dx}\right)^nx^{\mu}=\frac{\Gamma(\mu+1)}{\Gamma(\mu-n+1)}x^{\mu-n}$, from (\ref{(7)}), setting $\mu=1-z,$ we obtain the asymptotic series (see \cite[(4)]{Bender})
\begin{equation}\label{(8)}
 \hat{\Delta}_{f}^{-1}x^{-z}
=\frac{\Gamma(2-z)}{1-z}\sum_{n=0}^{\infty}a_{n}(-1)^{n}\frac{x^{1-z-n}}{\Gamma(2-z-n)},
 \end{equation}
 which is valid under the limit $x\to\infty.$
 Since $\Gamma(2-z-n)$ has the following integral representation
 \begin{equation*}
 \frac{1}{\Gamma(2-z-n)}=\frac{1}{2\pi i}\int_{C}e^{u}u^{n+z-2}du,
 \end{equation*}
 where $C$ denotes a Hankel contour that encircles the negative-$u$ axis in the positive orientation (see \cite[p.~255]{AS}),
then by (\ref{(3.1)}) and (\ref{(8)}) we have
 \begin{equation}\label{(9)}
 \begin{aligned}
 \hat{\Delta}_{f}^{-1}x^{-z}
&=\frac{\Gamma(1-z)}{2\pi i}x^{1-z}\int_{C}e^{u}u^{z-2}du\sum_{n=0}^{\infty}a_{n}\left(-\frac{u}{x}\right)^{n}\\
&=\frac{\Gamma(1-z)}{2\pi i}x^{1-z}\int_{C}e^{u}u^{z-2}f\left(-\frac{u}{x}\right)\left(\frac{u}{x}\right)du.
\end{aligned}
 \end{equation}
 Letting $\frac{u}{x}=t$ in (\ref{(9)}), we have
 \begin{equation}
 \begin{aligned}
 \hat{\Delta}_{f}^{-1}x^{-z}&=\frac{\Gamma(1-z)}{2\pi i}\int_{C} t^{z}e^{xt} f(-t)\frac{dt}{t}\\
 &=(-1)^{z-1}\frac{\Gamma(1-z)}{2\pi i}\int_{C}t^{z}e^{-xt} f(t)\frac{dt}{t} \\
 &=(-1)^{z-1}\Gamma(1-z)\frac{1}{2\pi i}\int_{C}t^{z}e^{(1-(x+1))t} f(t)\frac{dt}{t}.
 \end{aligned}
 \end{equation}
Therefore, by Proposition \ref{2.1} and Remark \ref{rem-2},
\begin{equation}\label{(10)}
 \hat{\Delta}_{f}^{-1}x^{-z}=(-1)^{z-1}L(f,z,x+1).
 \end{equation}
 Note that
 \begin{equation}\label{note}
 \begin{aligned}
 (\hat x\hat p+\hat p\hat x)(x^{-z})&=-i\left(2x\frac d{dx}+\mathbbm{1}\right)(x^{-z}) \\
 &=-i\left(2x\frac d{dx}(x^{-z}) +\mathbbm{1}(x^{-z}) \right) \\
 &=-i\left(2x(-z)x^{-z-1}+x^{-z}\right) \\
 &=i(2z-1)x^{-z}.
 \end{aligned}
 \end{equation}
 Let $\hat{H}_{f}=\hat{\Delta}_{f}^{-1}(\hat{x}\hat{p}+\hat{p}\hat{x})\hat{\Delta}_{f}$.
 Denote by $\Psi(f,z,x)=(-1)^{z-1}L(f,z,x+1)$,
 from (\ref{(10)}) and (\ref{note}), we have $
 \Psi(f,z,x)=\hat{\Delta}_{f}^{-1}x^{-z}$ and
 \begin{equation}\label{(11*)}
 \begin{aligned}
 \hat{H}_{f}\Psi(f,z,x)&=\hat{\Delta}_{f}^{-1}(\hat{x}\hat{p}+\hat{p}\hat{x})\hat{\Delta}_{f}(\hat{\Delta}_{f}^{-1} x^{-z})\\
 &=\hat{\Delta}_{f}^{-1}(\hat{x}\hat{p}+\hat{p}\hat{x})x^{-z}\\
 &=\hat{\Delta}_{f}^{-1}[i(2z-1)x^{-z}]\\
 &=i(2z-1)\hat{\Delta}_{f}^{-1}x^{-z}\\
 &=i(2z-1)\Psi(f,z,x).
 \end{aligned}
 \end{equation}

Thus if we propose the boundary condition
$\Psi(f,z,0)=0$ to the differential equation (\ref{(11*)}), then we have
the $n$th eigenstates of the Hamiltonian $\hat{H}_{f}$ are
\begin{equation}\label{Eq1}
\Psi(f,z_{n},x)=(-1)^{z_{n}-1}L(f,z_{n},x+1),
\end{equation} 
the $n$th eigenvalues are $E_{n}=i(2z_{n}-1)$ for all $n.$
From the boundary condition $\Psi(f,z,0)=0,$ 
we have $\Psi(f,z_{n},0)=0$ and by (\ref{Eq1}) $L(f,z_{n},1)=0,$
thus $z_{n}=\frac{1}{2}(1-iE_{n})$ are the zeros of the general family zeta functions $L(f,z)$  
(since by (\ref{(6)}) $L(f,z,1)=L(f,z)$).
\end{proof}

\end{document}